\newcommand{\N}{{\ensuremath{\mathbb{N}}}}
\newcommand{\C}{{\ensuremath{\mathbb{C}}}}
\newcommand{\stkout}[1]{\ifmmode\text{\sout{\ensuremath{#1}}}\else\sout{#1}\fi}
\DeclareMathOperator{\spc}{sp}
\DeclareMathOperator{\Max}{Max}
\DeclareMathOperator{\GL}{GL}
\DeclareMathOperator{\U}{U}
\DeclareMathOperator{\SU}{SU}
\DeclareMathOperator{\SL}{SL}
\DeclareMathOperator{\diag}{diag}
\DeclareMathOperator{\rad}{rad}
\DeclareMathOperator{\Ad}{Ad}
\newcommand{\ca}[1]{\ensuremath{\mathcal{#1}}}
\newtheorem{proposition}{Proposition}[section]
\newtheorem{lemma}[proposition]{Lemma}
\newtheorem{theorem}[proposition]{Theorem}
\newtheorem{corollary}[proposition]{Corollary}
\theoremstyle{definition}
\newenvironment{case}
{%
	\caseinner
}
{\endcaseinner}
\newtheorem{remark}[proposition]{Remark}
\numberwithin{equation}{section}
\newlength{\leftstackrelawd}
\newlength{\leftstackrelbwd}
\def\leftstackrel#1#2{\settowidth{\leftstackrelawd}%
	{${{}^{#1}}$}\settowidth{\leftstackrelbwd}{$#2$}%
	\addtolength{\leftstackrelawd}{-\leftstackrelbwd}%
	\leavevmode\ifthenelse{\lengthtest{\leftstackrelawd>0pt}}%
	{\kern-.5\leftstackrelawd}{}\mathrel{\mathop{#2}\limits^{#1}}}
\begin{document}
	
	\title[Continuous spectrum-shrinking maps between finite-dimensional algebras]{Continuous spectrum-shrinking maps between finite-dimensional algebras}
	
	\author{Ilja Gogi\'{c}, Mateo Toma\v{s}evi\'{c}}
	
	\address{I.~Gogi\'c, Department of Mathematics, Faculty of Science, University of Zagreb, Bijeni\v{c}ka 30, 10000 Zagreb, Croatia}
	\email{ilja@math.hr}
	
	\address{M.~Toma\v{s}evi\'c, Department of Mathematics, Faculty of Science, University of Zagreb, Bijeni\v{c}ka 30, 10000 Zagreb, Croatia}
	\email{mateo.tomasevic@math.hr}
	
	\thanks{We thank Peter \v{S}emrl for pointing us to the relevant literature on Kaplansky-type problems.}
	
	\keywords{spectrum shrinker, spectrum preserver, finite-dimensional algebra, Wedderburn's structure theorem, radical}

	\subjclass[2020]{47A10, 16P10, 16D60, 16N20}
	
	\date{\today}
	
	\begin{abstract}
		Let $\mathcal{A}$ and $\mathcal{B}$ be unital finite-dimensional complex algebras, each equipped with the unique Hausdorff vector topology. Denote by  $\mathrm{Max}(\mathcal{A})=\{\mathcal{M}_1, \ldots, \mathcal{M}_p\}$ and 
		$\mathrm{Max}(\mathcal{B})=\{\mathcal{N}_1, \ldots, \mathcal{N}_q\}$ the sets of all maximal ideals of $\mathcal{A}$ and $\mathcal{B}$, respectively. For each $1 \leq i \leq p$ and $1 \leq j \leq q$ define the quantities
		$$k_i:=\sqrt{\dim(\mathcal{A}/\mathcal{M}_i)} \quad \text{ and } \quad  m_j:=\sqrt{\dim(\mathcal{B}/\mathcal{N}_j)},$$
		which are positive integers by Wedderburn's structure theorem. We show that there exists a continuous spectrum-shrinking map $\phi: \mathcal{A} \to \mathcal{B}$ (i.e.\ $\mathrm{sp}(\phi(a))\subseteq \mathrm{sp}(a)$ for all $a \in \mathcal{A}$) if and only if for each $1\leq j \leq q$ the linear Diophantine equation
		$$
		k_1x_{1}^{j} + \cdots + k_px_{p}^j = m_j
		$$
		has a non-negative integer solution $(x_{1}^j,\ldots,x_{p}^j)\in \mathbb{N}_{0}^p$. In a similar manner we also characterize the existence of continuous spectrum-preserving maps $\phi: \mathcal{A} \to \mathcal{B}$ (i.e.\  $\mathrm{sp}(\phi(a))= \mathrm{sp}(a)$ for all $a \in \mathcal{A}$). Finally, we analyze conditions under which all continuous spectrum-shrinking maps  $\phi: \mathcal{A} \to \mathcal{B}$ are automatically spectrum-preserving.
	\end{abstract}

	\date{\today}
	
	\maketitle

	\section{Introduction}  
	A Jordan homomorphism between complex (associative) algebras $\ca{A}$ and $\ca{B}$ is a linear map $\phi: \ca{A}\to \ca{B}$ which preservers squares, i.e.\
	\begin{equation*}
		\phi(a^2) = \phi(a)^2, \quad \text{ for all } a \in \ca{A},
	\end{equation*}
	or equivalently, it satisfies the condition
	\begin{equation*}
		\phi(ab+ba) = \phi(a)\phi(b) + \phi(b)\phi(a), \quad \text{ for all } a,b \in \ca{A}.
	\end{equation*}
	Typical examples of Jordan homomorphisms include linear multiplicative and antimultiplicative maps. A central problem, with a rich historical context, is to identify conditions on algebras $\ca{A}$ and $\ca{B}$ under which any (typically surjective) Jordan homomorphism $\phi: \ca{A} \to \ca{B}$   is either multiplicative or antimultiplicative, or more generally, can be expressed as  a suitable combination of these types. Foundational results on this topic can be found in \cite{Herstein, JacobsonRickart, Smiley}, and for more recent developments, see \cite{Bresar} and the references therein.
	
	\smallskip

	The study of Jordan homomorphisms is of particular importance in the theory of Banach algebras. It is well-known (and easy to verify) that any unital Jordan homomorphism $\phi : \ca{A} \to \ca{B}$ between unital algebras $\ca{A}$ and $\ca{B}$ preserves invertibility, meaning $\phi(a)$ is invertible whenever $a \in \ca{A}$ is invertible (see e.g.\ \cite[Proposition~1.3]{Sourour}). Moreover, $\phi$ is  spectrum-shrinking, i.e.\ $\spc(\phi(a)) \subseteq \spc(a)$ for all $a \in \ca{A}$. A major open problem in this area is the Kaplansky-Aupetit question, which asks whether  Jordan epimorphisms between unital semisimple Banach algebras can be characterized as surjective linear spectrum-shrinking maps \cite{Aupetit, Kaplansky}. This question has received considerable attention, with progress made in certain special cases, but it remains unsolved, even for $C^*$-algebras (see e.g.\ \cite[p.~270]{BresarSemrl}).  Furthermore, in many situations it is more convenient to deal with spectrum-preserving maps (i.e.\ $\spc(\phi(a)) = \spc(a)$ for all $a \in \ca{A}$). When the map $\phi: \ca{A} \to \ca{B}$ is linear and unital, note that $\phi$ is spectrum-preserving if and only if it preserves invertibility in both directions (i.e.\ $a \in \ca{A}$ is invertible if and only if  $\phi(a)$ is invertible). In particular, if a result for spectrum-preserving maps is established, it is natural to inquire whether it can be extended to spectrum-shrinking maps. However, the literature suggests that such improvements are generally far from simple. For instance, spectrum-preserving linear surjections between algebras of bounded linear maps on Banach spaces were characterized in \cite[Theorem~1]{JafarianSourour}, with a more concise proof provided in \cite[Theorem~2]{Semrl}.  In contrast, deriving an analogous result for spectrum-shrinking maps \cite[Theorem 1.1]{Sourour} is much more challenging, requiring a significantly longer proof involving complex analysis. Similarly, the techniques used to characterize spectrum-shrinking linear maps between matrix algebras in \cite{Pazzis, RodmanSemrl} are much more intricate than those for spectrum-preserving maps (as demonstrated in \cite{Semrl}).

	In our recent preprint \cite{ChirvasituGogicTomasevic1}, we observed that in certain cases continuous (not necessarily linear) spectrum-shrinking maps are automatically spectrum-preserving. For instance, let $M_n=M_n(\C)$ denote the algebra of $n \times n$ complex matrices and let $\ca{X}_n\subseteq M_n$ be one of the following subsets: $M_n$ itself, the general linear group $\GL(n)$, the special linear group $\SL(n)$, the unitary group $\U(n)$, or the subset of $n \times n$ normal matrices. Then, by \cite[Corollary~1.2]{ChirvasituGogicTomasevic1}, for an arbitrary $m \in \N$ there exists a continuous spectrum-shrinking map $\phi : \ca{X}_n \to M_m$ if and only if $n$ divides $m$, and in that case $\phi$ is necessarily spectrum-preserving. On the other hand, the analogous result fails for the special unitary group $\SU(n)$ and the space of self-adjoint matrices (see \cite[Remarks~1.4]{ChirvasituGogicTomasevic1}). A more general statement  that encompasses  \cite[Corollary~1.2]{ChirvasituGogicTomasevic1} is stated in \cite[Theorem~1.1]{ChirvasituGogicTomasevic1}, with a further refinement for general connected compact groups given in \cite{Chirvasitu}. Additionally, a variant of \cite[Theorem~1.1]{ChirvasituGogicTomasevic1}, concerning maps on singular matrices, is presented in  \cite[Theorem~1.2]{ChirvasituGogicTomasevic2}.
	
	\smallskip 
	
	The purpose of this note is to establish a variant of \cite[Theorem~1.1]{ChirvasituGogicTomasevic1} for maps between arbitrary unital finite-dimensional complex algebras. Before stating our main result, first recall that, according to  Molien's theorem (see e.g.\ \cite[Corollary~2.66]{Bresar-book}), any simple finite-dimensional complex algebra is isomorphic to a full matrix algebra $M_n$ for some $n\in \N$. In particular, if $\ca{A}$ is a finite-dimensional complex algebra, then for each $\ca{M} \in \Max(\mathcal{A})$ (the set of all maximal ideals of $\ca{A}$), the quantity $\sqrt{\dim(\mathcal{A}/\mathcal{M})}$ is a positive integer. We also recall that a \emph{structural matrix algebra (SMA)}  is a unital subalgebra of $M_n$ ($n \in \N$) linearly  spanned by a set of matrix units \cite{VanWyk}. Equivalently, SMAs are precisely the subalgebras of $M_n$ that contain the diagonal matrices (see \cite[Proposition 3.1]{GogicTomasevic1}).
	\begin{theorem}\label{thm:main} Let $\mathcal{A}$ and $\ca{B}$ be unital finite-dimensional complex algebras, each equipped with the unique Hausdorff vector topology. If $\mathrm{Max}(\ca{A})=\{\ca{M}_1, \ldots, \ca{M}_p\}$ and 
		$\mathrm{Max}(\ca{B})=\{\ca{N}_1, \ldots, \ca{N}_q\}$,  define the quantities
		\begin{equation}\label{eq:k_i}
			k_i:=\sqrt{\dim(\mathcal{A}/\mathcal{M}_i)} \quad \text{ and } \quad  m_j:=\sqrt{\dim(\mathcal{B}/\mathcal{N}_j)} \quad (1 \leq i \leq p, \, 1 \leq j \leq q).
		\end{equation}
		Then the following holds:
		\begin{itemize}
			\item[(a)] There exists a continuous spectrum-shrinking map $\phi: \ca{A} \to \ca{B}$ if and only if for each $1\leq j \leq q$ the linear Diophantine equation
			\begin{equation}\label{eq:diophantine equation 1}
				k_1x_{1}^{j} + \cdots + k_px_{p}^j = m_j
			\end{equation}
			has a non-negative integer solution $(x_{1}^j,\ldots,x_{p}^j)\in \mathbb{N}_{0}^p$.
			\item[(b)]There exists a continuous spectrum-preserving map $\phi: \ca{A} \to \ca{B}$ if and only if there exists a family $\{(x_{1}^j,\ldots,x_{p}^j) : 1 \leq j \leq q\}$ of non-negative integer solutions to \eqref{eq:diophantine equation 1}, with the property that for each $1 \leq i \leq p$ there exists some  $1 \leq j \leq q$ with $x_i^j>0$.
			\item[(c)]  If every continuous spectrum-shrinking map $\phi: \mathcal{A} \to \mathcal{B}$ is spectrum-preserving, then any family $\{(x_{1}^j,\ldots,x_{p}^j) : 1 \leq j \leq q\}$ of non-negative integer solutions to \eqref{eq:diophantine equation 1} satisfies that for each $1 \leq i \leq p$ there exists some  $1 \leq j \leq q$ with $x_i^j>0$. The converse holds when $\mathcal{A}$ is isomorphic to an SMA.
		\end{itemize} 
	\end{theorem}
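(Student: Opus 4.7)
The plan is to handle (a), (b), (c) in sequence, with (a) providing the main construction and serving as the key link to \cite[Theorem~1.1]{ChirvasituGogicTomasevic1}.

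For (a), the \emph{if} direction is by explicit construction. Fix Wedderburn sections $s_\mathcal{A}: \mathcal{A}/\rad(\mathcal{A}) \to \mathcal{A}$ and $s_\mathcal{B}: \mathcal{B}/\rad(\mathcal{B}) \to \mathcal{B}$, let $\pi_i: \mathcal{A} \to M_{k_i}$ denote the $i$-th semisimple projection (the composition of $\mathcal{A} \to \mathcal{A}/\mathcal{M}_i$ with a chosen isomorphism $\mathcal{A}/\mathcal{M}_i \cong M_{k_i}$), and for each $j$ define a unital algebra homomorphism $\psi_j: \mathcal{A} \to M_{m_j}$ by placing $x_i^j$ copies of $\pi_i(a)$ block-diagonally for each $i$. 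Then $\phi := s_\mathcal{B} \circ (\psi_1, \ldots, \psi_q)$ is a unital homomorphism, hence continuous and spectrum-shrinking. For the \emph{only if} direction, composing $\phi$ with the quotient $\mathcal{B} \to \mathcal{B}/\mathcal{N}_j \cong M_{m_j}$ and precomposing with $s_\mathcal{A}$ (spectra depend only on the radical quotient) yields a continuous spectrum-shrinking map $\bigoplus_i M_{k_i} \to M_{m_j}$, and the Diophantine solution is extracted from \cite[Theorem~1.1]{ChirvasituGogicTomasevic1}.

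For (b), the \emph{if} direction reuses the (a) construction; the covering condition yields $\spc(\phi(a)) = \bigcup_j \bigcup_{i : x_i^j > 0} \spc(\pi_i(a)) = \bigcup_i \spc(\pi_i(a)) = \spc(a)$. For \emph{only if} (the $p = 1$ case is automatic), I would choose for each $i$ an element $a^{(i)} \in \mathcal{A}$ with $\pi_i(a^{(i)}) = \lambda_i I_{k_i}$ for a nonzero $\lambda_i$ and $\pi_l(a^{(i)}) = 0$ for $l \neq i$, so $\spc(a^{(i)}) = \{0, \lambda_i\}$. Spectrum preservation forces $\lambda_i \in \spc(\phi_j(a^{(i)}))$ for some $j = j_i$, where $\phi_j$ is the $j$-th projection of $\phi$. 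The plan is then to invoke a strengthening of \cite[Theorem~1.1]{ChirvasituGogicTomasevic1} attaching to any continuous spectrum-shrinking $\psi: \bigoplus_i M_{k_i} \to M_n$ a canonical Diophantine solution whose support equals the set of components recorded by the spectrum of $\psi$; this lets me select the solution for $\phi_{j_i}$ with $x_i^{j_i} > 0$, producing a covering family.

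For (c), the forward direction is contrapositive: a family of solutions missing an index $i_0$ feeds into the (a) construction to yield a continuous spectrum-shrinking $\phi$ that fails spectrum preservation on elements with a distinctive eigenvalue on the $i_0$-th component. For the converse under the SMA hypothesis on $\mathcal{A}$, (a) attaches a family of Diophantine solutions to any continuous spectrum-shrinking $\phi: \mathcal{A} \to \mathcal{B}$; by assumption this family covers, so spectrum preservation follows once the inclusion $\spc(\phi_j(a)) \subseteq \bigcup_{i : x_i^j > 0} \spc(\pi_i(a))$ is upgraded to equality. The SMA structure---equivalently, the presence of a distinguished family of rank-one idempotents in $\mathcal{A}$ decomposing $1$ along the simple components---provides the test elements needed to force this equality. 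The main obstacle, common to the \emph{only if} direction of (b) and the SMA converse of (c), is precisely this structural upgrade of \cite[Theorem~1.1]{ChirvasituGogicTomasevic1}: passing from existence of a Diophantine solution to a canonical one whose support matches the components detected by the spectrum of the map. I expect this to follow from Hausdorff-metric continuity of $\spc$ combined with connectedness of $\mathcal{A}$, forcing the set-valued ``spectral type'' of $\phi$ to be globally constant; the SMA hypothesis supplies the test elements needed to pin down this support.
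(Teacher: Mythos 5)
Your reductions (to semisimple $\mathcal{B}$ via a Wedderburn section, and to $\mathcal{A}_{ss}$ via the idempotent onto the semisimple part) and your explicit block-diagonal construction for the ``if'' directions of (a) and (b) match the paper. The gap is in every ``only if'' direction: you repeatedly defer to ``a strengthening of \cite[Theorem~1.1]{ChirvasituGogicTomasevic1} attaching to any continuous spectrum-shrinking $\psi: \bigoplus_i M_{k_i} \to M_n$ a canonical Diophantine solution whose support equals the set of components recorded by the spectrum,'' and you acknowledge this as ``the main obstacle.'' That strengthening is not something you can cite --- it is essentially the theorem being proven. Indeed, the paper explicitly remarks that \cite[Theorem~1.1]{ChirvasituGogicTomasevic1} does \emph{not} apply to $M_{k_1}\oplus\cdots\oplus M_{k_p}$ unless $p=1$, because the permutation group $\mathcal{A}^\times\cap S_n$ fails to act transitively on $[1,n]$; and you cannot restrict to a single block $M_{k_i}$ either, since it is not a unital subalgebra of the direct sum (the spectrum of $(X_i,0,\ldots,0)$ picks up $0$). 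So your extraction of a Diophantine solution in (a), your covering argument in (b), and both halves of (c) all rest on an unproven claim.

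What is actually needed, and what the paper supplies, is a two-step rigidity argument. First, viewing $M_{k_1}\oplus\cdots\oplus M_{k_p}$ (or more generally an SMA in block upper-triangular form) inside $M_n$, one considers the continuous map $F(S,D)=k_{\phi_j(SDS^{-1})}$ on the connected space $\mathcal{A}^\times\times\mathcal{E}_n$ ($\mathcal{E}_n$ the diagonal matrices with distinct entries); since spectrum-shrinking forces $F(S,D)$ to be one of finitely many ``multiplicity patterns'' $(x-\lambda_1)^{d_1}\cdots(x-\lambda_n)^{d_n}$, a clopen argument (\cite[Lemma~1.7]{ChirvasituGogicTomasevic1}) shows the exponent vector $(\ell_1(j),\ldots,\ell_n(j))$ is globally constant. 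Second, conjugating by permutation matrices that swap two indices \emph{within} the same diagonal block (these lie in $\mathcal{A}^\times$) forces $\ell_r(j)=\ell_s(j)$ for $r,s$ in the same block, which is exactly what turns the exponent vector into a solution of the Diophantine equation and makes its support meaningful. Your sketch gestures at the first step (``connectedness forcing the spectral type to be globally constant'') but omits the second entirely, and without it there is no well-defined $x_i^j$. Finally, for the converse of (c) your appeal to ``rank-one idempotents providing test elements'' misses the actual role of the SMA hypothesis: one needs the density of $\{S\mathrm{diag}(\lambda_1,\ldots,\lambda_n)S^{-1} : S\in\mathcal{A}^\times,\ \lambda_i \text{ distinct}\}$ in $\mathcal{A}$ (\cite[Lemma~3.4]{GogicTomasevic2}) to propagate the characteristic-polynomial identity from conjugated diagonals to all of $\mathcal{A}$ by continuity; this density is precisely what may fail for general finite-dimensional algebras and is why the paper leaves the general converse open.
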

	
	The proof of Theorem \ref{thm:main} will be provided in the next section. At the moment we are uncertain whether the converse of Theorem \ref{thm:main} (c) holds for arbitrary  unital finite-dimensional complex algebras $\ca{A}$ and we anticipate to investigate it in future work.
	
	\section{Proof of Theorem \ref{thm:main}}
	Before proving Theorem \ref{thm:main}, we introduce some notation that will be used throughout. Let $\ca{A}$ be a unital finite-dimensional (associative) complex algebra with identity $1_\ca{A}$. We assume that $\ca{A}$ is equipped with the unique Hausdorff vector topology. As already noted, by $\Max(\ca{A})$ we denote the set of all maximal ideals of $\ca{A}$. The \emph{(Jacobson) radical} of $\ca{A}$ is denoted by $\rad(\ca{A})$. Since $\ca{A}$ is finite-dimensional, it follows that $\rad(\ca{A})$ is the largest nilpotent ideal of $\ca{A}$ (see e.g.\ \cite[Section~2.1]{Bresar-book} and \cite[Section~3.1]{DrozdKirichenko-book}).  Moreover, by Wedderburn's structure theorem (\cite[Section~2.9]{Bresar-book} or \cite[Section~2.4]{DrozdKirichenko-book}), we also have
	\begin{equation}\label{eq:radical}
		\rad(\ca{A})=\bigcap\{\ca{M} : \ca{M} \in \Max(\ca{A})\}. 
	\end{equation}
	Furthermore, as the underlying field $\C$ is perfect, we can invoke Wedderburn's principal theorem (see e.g.\ \cite[p.~191]{Rowen-book}), which guarantees the existence of a semisimple subalgebra $\ca{A}_{ss}$ of $\ca{A}$, isomorphic to $\ca{A}/\rad({\ca{A}})$, such that $\ca{A}$ is  the vector space direct sum of $\ca{A}_{ss}$ and $\rad(\ca{A})$. Throughout, we denote by:
	\begin{itemize}
		\item $Q_{\ca{A}} : \ca{A} \to \ca{A}/\rad(\ca{A})$, the canonical quotient map,  
		%\item $\chi_{\ca{A}}:=(Q_{\ca{A}})|_{\ca{A}_{ss}}: \ca{A}_{ss} \to \ca{A}/\rad(\ca{A})$, which is an (algebra) isomorphism, and
		\item $\Pi_{\ca{A}} : \ca{A} \to \ca{A}_{ss}$, the idempotent with  image $\ca{A}_{ss}$ and kernel $\rad(\ca{A})$.
	\end{itemize}
	
	\smallskip 
	
	Next, by $\C[x]$ we denote the polynomial algebra in one variable $x$  over $\C$. For $a \in \ca{A}$, by $\C[a]$ we denote the unital subalgebra of $\ca{A}$ generated by $a$, i.e. 
	$$
	\C[a]=\{p(a) : p \in \C[x]\} \subseteq \ca{A}.
	$$ 
	Let  $\ca{A}^\times$ denote the group of all invertible elements in $\ca{A}$. The spectrum of an element $a\in \ca{A}$ is denoted by $\spc(a)$ (or $\spc_{\ca{A}}(a)$ when we want to emphasize the underlying algebra), and is defined as
	$$
	\spc(a):=\{\lambda \in \C : \lambda 1_{\ca{A}} - a   \notin \ca{A}^\times\}.
	$$
	Since $\ca{A}$ is finite-dimensional, every element $a\in \ca{A}$ is algebraic (i.e.\ there exists a nonzero $p \in \C[x]$ such that $p(a)=0$),  implying that the spectrum of any element in $\ca{A}$ is nonempty and finite.  Furthermore, the inverse of any invertible element $a\in \ca{A}$ lies in the subalgebra $\C[a]$. Thus, if $\ca{B}$ is any unital finite-dimensional complex algebra with $1_{\ca{B}}=1_{\ca{A}}$,  the spectrum of any element $a\in \ca{A}\cap \ca{B}$  is the same whether computed in $\ca{B}$ or $\ca{A}$. Finally, note that $\mathcal{A}^{\times}$ is path-connected. Indeed, for any $a \in \mathcal{A}^\times$, the finiteness of the spectrum allows the selection of a suitable branch of the logarithm, yielding $a = \exp(b)$ for some $b \in \mathcal{A}$ (via the  holomorphic functional calculus \cite[Section~3.3]{KadisonRingrose-book}). Then 
	$$
	[0,1] \to \ca{A}^\times, \quad t \mapsto \exp(tb),
	$$ defines a continuous path from $1_{\ca{A}}$ to $a$ within $\mathcal{A}^\times$. 
	
	\smallskip

	We state the following straightforward fact and provide its proof for completeness, as we were unable to locate a direct reference.
	\begin{lemma}\label{le:specradical}
		Let $\ca{A}$ be a unital finite-dimensional complex algebra. Then, for all $a \in \ca{A}$ we have
		$$
		\spc_{\ca{A}}(a)=\spc_{\ca{A}/\rad(\ca{A})}(Q_{\ca{A}}(a))=\spc_{\ca{A}_{ss}}(\Pi_{\ca{A}}(a)).
		$$
	\end{lemma}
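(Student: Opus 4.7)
The plan is to prove the two equalities separately, using in each case a standard interplay between homomorphisms and spectra together with nilpotency of the radical.

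For the first equality $\spc_{\ca{A}}(a)=\spc_{\ca{A}/\rad(\ca{A})}(Q_{\ca{A}}(a))$, the inclusion $\supseteq$ is immediate, since $Q_{\ca{A}}$ is a unital algebra homomorphism and such maps are automatically spectrum-shrinking. For the reverse inclusion I would argue contrapositively: if $\lambda\notin\spc_{\ca{A}/\rad(\ca{A})}(Q_{\ca{A}}(a))$, pick $b\in\ca{A}$ whose image in $\ca{A}/\rad(\ca{A})$ is the inverse of $Q_{\ca{A}}(\lambda 1_{\ca{A}}-a)$, so that
\[
(\lambda 1_{\ca{A}}-a)b=1_{\ca{A}}+r,\qquad b(\lambda 1_{\ca{A}}-a)=1_{\ca{A}}+r',
\]
for some $r,r'\in\rad(\ca{A})$. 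Here is where the finite-dimensionality enters through the already-recorded fact that $\rad(\ca{A})$ is nilpotent: a nilpotent element $r$ gives an inverse $(1_{\ca{A}}+r)^{-1}=\sum_{k\ge 0}(-r)^k$ as a finite sum. Multiplying the two displayed identities by these inverses on the right and on the left respectively exhibits two-sided inverses of $\lambda 1_{\ca{A}}-a$, so $\lambda\notin\spc_{\ca{A}}(a)$.

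For the second equality, I would exploit Wedderburn's principal theorem directly. Since $\ca{A}=\ca{A}_{ss}\oplus\rad(\ca{A})$ as a vector space and $\ca{A}_{ss}$ is a subalgebra, the restriction $Q_{\ca{A}}|_{\ca{A}_{ss}}\colon\ca{A}_{ss}\to\ca{A}/\rad(\ca{A})$ is a unital algebra isomorphism (surjectivity is the complement property and injectivity is $\ca{A}_{ss}\cap\rad(\ca{A})=\{0\}$). Algebra isomorphisms preserve spectra, so
\[
\spc_{\ca{A}_{ss}}(\Pi_{\ca{A}}(a))=\spc_{\ca{A}/\rad(\ca{A})}(Q_{\ca{A}}(\Pi_{\ca{A}}(a))).
\]
Finally, since $a-\Pi_{\ca{A}}(a)\in\ker\Pi_{\ca{A}}=\rad(\ca{A})$, we have $Q_{\ca{A}}(\Pi_{\ca{A}}(a))=Q_{\ca{A}}(a)$, and combining with the first equality yields the claim.

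There is no real obstacle here: the only non-formal ingredient is the invertibility of $1_{\ca{A}}+r$ for $r$ nilpotent, and the rest is bookkeeping around the decomposition $\ca{A}=\ca{A}_{ss}\oplus\rad(\ca{A})$. One small point to be careful about is that the ``preimage of the inverse'' argument in the first part produces only one-sided identities modulo the radical on each side, so one really needs to write both relations and invert both $1_{\ca{A}}+r$ and $1_{\ca{A}}+r'$ in order to conclude two-sided invertibility of $\lambda 1_{\ca{A}}-a$.
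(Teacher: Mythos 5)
Your proof is correct and follows essentially the same route as the paper: reduce to lifting invertibility through the quotient by the nilpotent radical, with the second equality following from the isomorphism $Q_{\ca{A}}|_{\ca{A}_{ss}}$ and $Q_{\ca{A}}(\Pi_{\ca{A}}(a))=Q_{\ca{A}}(a)$. The only cosmetic difference is that the paper handles the one-sided-inverse issue by invoking finite-dimensionality (from $ab=1_{\ca{A}}-z$ invertible it concludes $a\in\ca{A}^\times$), whereas you write both one-sided relations and invert $1_{\ca{A}}+r$ and $1_{\ca{A}}+r'$ directly; both are fine.
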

	\begin{proof}
		Let $a \in \ca{A}$ be fixed. It suffices to show that 
		$$a \text{ is invertible in } \ca{A}  \iff Q_{\ca{A}}(a) \text{ is invertible in } \ca{A}/\rad(\ca{A}).
		$$
		The implication $\Longrightarrow$ is trivial, so assume that  $ Q_{\ca{A}}(a)$ is invertible in $\ca{A}/\rad(\ca{A})$. As $\rad(\ca{A})$ is a nilpotent ideal, there exists $b \in \ca{A}$ and a nilpotent element $z \in \ca{A}$ such that
		\begin{equation}\label{eq:ab}
			ab=1_{\ca{A}}-z.
		\end{equation}
		Since $1_{\ca{A}}-z\in \ca{A}^\times$ and  $\ca{A}$ is finite-dimensional, \eqref{eq:ab} implies that $a\in \ca{A}^\times$. 
	\end{proof}

	\smallskip
	
	In the sequel, for integers $k<l$ by $[k,l]$ we denote the set of all integers between $k$ and $l$.
	
	\begin{proof}[Proof of Theorem \ref{thm:main}]
		First of all, note that it suffices to prove the theorem when $\ca{B}$ is a semisimple algebra. Indeed, by Wedderburn's principal theorem we have $\ca{B}=\ca{B}_{ss} \dotplus \rad(\ca{B})$ (as vector spaces, where $\ca{B}_{ss}$ is a subalgebra of $\ca{B}$ isomorphic to $\ca{B}/\rad({\ca{B}})$). Then there exists a continuous spectrum-shrinking  (respectively, spectrum-preserving) map $\ca{A} \to \ca{B}$ if and only if  there exists a continuous spectrum-shrinking  (respectively, spectrum-preserving) map $\ca{A} \to \ca{B}_{ss}$. Indeed, let $\phi : \ca{A} \to \ca{B}$ be an arbitrary continuous spectrum-shrinking (preserving) map.  By the established notation and Lemma \ref{le:specradical}, the map
		$\Pi_{\ca{B}} \circ \phi :  \ca{A} \to \ca{B}_{ss}$ retains the same spectral property (and is clearly continuous). The converse implication is trivial. An analogous argument also shows that all continuous spectrum-shrinking maps $\ca{A} \to \ca{B}$ are spectrum-preserving if and only if the same condition holds for maps  $\ca{A} \to  \ca{B}_{ss}$. In light of Wedderburn’s structure theorem, we may therefore assume without loss of generality that
		$$\ca{B} = M_{m_1} \oplus \cdots \oplus M_{m_q},$$
		and regard $\ca{B}$ as a block-diagonal subalgebra of $M_m$ in a natural way, where $m:=m_1 + \cdots + m_q$. Under this identification, any map $\phi: \ca{A} \to \ca{B}$  decomposes as
		$$\phi= (\phi_1,\ldots,\phi_q),$$ 
		with each $\phi_j : \ca{A} \to M_{m_j}$ denoting the corresponding coordinate map. It is then clear that $\phi$ is spectrum-shrinking if and only if each $\phi_j$ is spectrum-shrinking, for all $1 \le j \le q$.
		
		\begin{case}
			First assume that $\ca{A}$ is isomorphic to an SMA.
		\end{case}
		As the entire statement of Theorem \ref{thm:main} is invariant under isomorphism, we may assume without loss of generality that $\mathcal{A}\subseteq M_n$ is an SMA for some $n \in \mathbb{N}$.
		
		\smallskip
		
		Assume that $\phi : \ca{A} \to  \ca{B}=M_{m_1} \oplus \cdots \oplus M_{m_q}$ is a continuous spectrum-shrinking map. Then, as noted before, each component $\phi_j : \ca{A} \to M_{m_j}$ is a continuous spectrum-shrinking map.
		
		By \cite[Section 2]{Akkurt} (see also \cite[Lemma~3.2]{GogicTomasevic1}\label{le:AkkurtSMA}), there exists a permutation matrix $R \in M_n^\times$ such that
		\begin{equation}\label{eq:Akkurt permutation argument}
			R \ca{A} R^{-1}=\begin{bmatrix} M_{k_1} & M_{k_1,k_2}^\dagger & \cdots & M_{k_1,k_p}^\dagger \\ 0 & M_{k_2} & \cdots & M_{k_2,k_p}^\dagger \\
				\vdots & \vdots & \ddots & \vdots \\
				0 & 0 & \cdots & M_{k_p} \end{bmatrix}
		\end{equation}
		for some $p, k_1,\ldots,k_p \in \N$ such that $k_1+\cdots+k_p = n$, where for any $1\leq i < j \leq n$, $M_{k_i,k_j}^\dagger$ is either zero or the space of all $k_i \times k_j$ rectangular complex matrices. Once again, since the full statement of Theorem \ref{thm:main} is invariant under isomorphism (and in particular under conjugation), we may without loss of generality assume that $\mathcal{A}$ is already in the form given on the right-hand side of \eqref{eq:Akkurt permutation argument}. In that case we have  
		$$\rad(\ca{A})=\begin{bmatrix} 0_{k_1} & M_{k_1,k_2}^\dagger & \cdots & M_{k_1,k_p}^\dagger \\ 0 & 0_{k_2} & \cdots & M_{k_2,k_p}^\dagger \\
			\vdots & \vdots & \ddots & \vdots \\
			0 & 0 & \cdots & 0_{k_p} \end{bmatrix},$$
		so that the quantities $k_1, \ldots, k_p$ of \eqref{eq:k_i} are precisely the corresponding sizes of the diagonal blocks (see \cite[Proposition 4.1]{Coelho} for a more intrinsic characterization of the radical of SMAs).
		
		\smallskip
		
		Denote by $\ca{D}_n$ the subalgebra of all diagonal matrices in $M_n$. As already noted, $\mathcal{D}_n \subseteq \mathcal{A}$. Set 
		$$\Lambda_n := \diag(1,\ldots,n) \in \ca{D}_n.$$ 
		Given a matrix $X$, denote by 
		$$k_X(x) := \det(x I-X)$$ 
		its characteristic polynomial. Fix some $1 \le j \le q$. We have
		\begin{equation}\label{eq:initial choice of js}
			k_{\phi_j(\Lambda_n)}(x) = (x-1)^{\ell_1(j)}\cdots (x-n)^{\ell_n(j)}
		\end{equation}
		for some $\ell_1(j),\ldots,\ell_n(j) \ge 0$ such that $\ell_1(j)+\cdots+\ell_n(j) = m_j$.
		
		\smallskip
		
		Denote by $\ca{E}_n \subseteq \ca{D}_n$ the set of all diagonal matrices with $n$ distinct diagonal entries. It is easy to see that $\ca{E}_n$ is path-connected. Consider the continuous map
		$$F : \ca{A}^\times \times \ca{E}_n \to \C[x]_{\le m_j}, \quad F(S,D) := k_{\phi_j(SDS^{-1})},$$
		where $\C[x]_{\le m_j}$ is the subspace of $\C[x]$ consisting of polynomials of degree $\le m_j$, endowed with the unique Hausdorff vector topology. For each $D = \diag(\lambda_1,\ldots,\lambda_n) \in \ca{E}_n$ and $S \in \ca{A}^\times$ there exist $\ell_1(j,S,D), \ldots, \ell_n(j,S,D) \ge 0$ with sum $m_j$, such that
		$$k_{\phi_j(SDS^{-1})}(x) = (x-\lambda_1)^{\ell_1(j,S,D)}\cdots (x-\lambda_n)^{\ell_n(j,S,D)}.$$
		For any $(d_1,\ldots,d_n) \in \N_0^n$ such that $d_1+\cdots+d_n = m_j$, consider the continuous map
		$$F_{(d_1,\ldots,d_n)} : \ca{A}^\times \times \ca{E}_n \to \C[x]_{\le m_j}, \quad F_{(d_1,\ldots,d_n)}(S,\diag(\lambda_1,\ldots,\lambda_n)) := (x-\lambda_1)^{d_1}\cdots (x-\lambda_n)^{d_n}.$$
		Clearly, these functions map each element of $\ca{A}^\times \times \ca{E}_n$ into distinct polynomials. By \cite[Lemma~1.7]{ChirvasituGogicTomasevic1}, it follows that 
		$F$ equals exactly one of them. Since $F(I,\Lambda_n) = F_{(\ell_1(j),\ldots,\ell_n(j))}(I,\Lambda_n)$ by \eqref{eq:initial choice of js}, it follows $F = F_{(\ell_1(j),\ldots,\ell_n(j))}$ and therefore
		\begin{equation}\label{eq:before permuting}
			k_{\phi_j(S\diag(\lambda_1,\ldots,\lambda_n)S^{-1})}(x) = (x-\lambda_1)^{\ell_1(j)}\cdots (x-\lambda_n)^{\ell_n(j)}
		\end{equation}
		for all $S \in \ca{A}^\times$ and (distinct, and hence by density arbitrary) $\lambda_1,\ldots,\lambda_n \in \C$.
		
		\smallskip
		
		Let $1 \le l \le p$ and fix some distinct $r,s \in [k_1+\cdots + k_{l-1}+1, k_1+\cdots + k_l]$ (we formally set $k_0 := 0$). The permutation matrix $P \in M_n^\times$ pertaining to the permutation which swaps $r$ and $s$ is contained in $\ca{A}^\times$ by the assumption. Note that $P^{-1}\Lambda_n P$ is obtained from $\Lambda_n$ by swapping $r$ and $s$. Therefore,
		\begin{align*}
			\prod_{1 \le i \le n} (x-i)^{\ell_i(j)} &= F(I,\Lambda_n) = F(P, P^{-1}\Lambda_n P) \\
			&= \left(\prod_{i 
				\in [1,n]\setminus \{r,s\}} (x-i)^{\ell_i(j)}\right)(x-r)^{\ell_s(j)}(x-s)^{\ell_r(j)}.
		\end{align*}
		It follows $\ell_r(j) = \ell_s(j)$. Overall, we conclude
		\begin{align*}
			&\ell_1(j) = \cdots = \ell_{k_1}(j), \\
			&\ell_{k_1+1}(j)=\cdots = \ell_{k_1+k_2}(j), \\
			&\qquad \vdots\\
			&\ell_{k_1+\cdots + k_{p-1}+1}(j) = \cdots= \ell_{k_1+\cdots + k_{p}}(j).
		\end{align*}
		In particular, we have
		$$
		m_j = \ell_1(j)+\cdots + \ell_n(j) = k_1\ell_{k_1}(j) + \cdots + k_p \ell_{k_1+\cdots + k_{p}}(j)
		$$
		so 
		\begin{equation}\label{eq:final family}
			(x_1^j, \ldots, x_p^j):=(\ell_{k_1}(j),\ldots,\ell_{k_1+\cdots + k_{p}}(j)) \in \N_0^p
		\end{equation} is a solution to \eqref{eq:diophantine equation 1}. This shows the ``$\implies$'' implication of (a).
		
		\smallskip
		
		Also note that, by \eqref{eq:before permuting}, for each $S \in \ca{A}^\times$ and $\lambda_1,\ldots,\lambda_n \in \C$, we have
		\begin{align}\label{eq:final exponents}
			k_{\phi(S\diag(\lambda_1,\ldots,\lambda_n)S^{-1})}(x) &= \prod_{1 \le j \le q} k_{\phi_j(S\diag(\lambda_1,\ldots,\lambda_n)S^{-1})}(x) \nonumber \\
			&= \prod_{1 \le j \le q}(x-\lambda_1)^{\ell_1(j)}\cdots (x-\lambda_n)^{\ell_n(j)} \nonumber \\ 
			&= \prod_{1 \le i \le n} (x-\lambda_i)^{\sum_{1 \le j \le q}\ell_i(j)}.
		\end{align}
		We now address the ``$\implies$'' part of (b). Suppose, in addition, that $\phi$ is spectrum-preserving. Then, by \eqref{eq:final exponents}, the associated family of solutions given in \eqref{eq:final family} must satisfy $\sum_{1 \le j \le q}\ell_i(j) > 0$ for all $1 \le i \le n$. In particular, the family  
		$$
		\{(\ell_{k_1}(j),\ldots,\ell_{k_1+\cdots + k_{p}}(j)) : 1 \le j \le q\}
		$$
		meets the requirement stated in part (b).
		
		\smallskip
		
		We now prove the ``$\impliedby$'' direction of (c). Suppose that for each family $\{(x_{1}^j,\ldots,x_{p}^j) : 1 \le j \le q\}$ of non-negative integer solutions of \eqref{eq:diophantine equation 1} holds that for all $1 \leq i \leq p$ there exists some  $1 \leq j \leq q$ with $x_i^j>0$. In particular, if there exists some continuous spectrum-shrinking map $\phi : \ca{A}\to \ca{B}$, then the associated family of solutions derived in \eqref{eq:final family} satisfies the above condition, so that $\sum_{1 \le j \le q}\ell_i(j) > 0$ for all $1 \le i \le n$. As $\ca{A}$ is an SMA, by \cite[Lemma 3.4]{GogicTomasevic2} the set
		$$\Ad_{\ca{A}^\times}(\ca{E}_n)=\{S\diag(\lambda_1,\ldots,\lambda_n)S^{-1} : S\in \ca{A}^\times, \lambda_1,\ldots,\lambda_n\in \C \text{ pairwise distinct}\}$$
		is dense in $\ca{A}$. Therefore, by \eqref{eq:final exponents} and the continuity of $\phi$ we conclude that $\phi$ spectrum-preserving. 
		
		\smallskip
		
		We now prove the remaining directions in (a), (b) and (c). Suppose that \eqref{eq:diophantine equation 1} has a solution $(x_1^j,\ldots,x_p^j) \in \N^p_0$ for each $1 \le j \le q$. Define the map $\phi_j : \ca{A} \to M_{m_j}$ by sending $X \in \ca{A}$ with diagonal blocks $X_1\in M_{k_1}, \ldots, X_p \in M_{k_p}$ to the block-diagonal matrix in $M_{m_j}$, where $X_i$ appears on the diagonal exactly $x_i^j$ times (the blocks can be arranged in any order).  It is clear that 
		$$
		\phi := (\phi_1,\ldots,\phi_q) : \ca{A} \to M_{m_1} \oplus \cdots \oplus M_{m_q}=\ca{B}
		$$ 
		is a continuous spectrum-shrinking map, which finishes the ``$\impliedby$'' part of (a). Moreover, note that $\phi$ fails to be spectrum-preserving if and only if there exists $1 \le i \le p$ such that $x_i^j = 0$, for all $1\le j \le q$. The latter establishes the ``$\impliedby$'' direction of (b) and the ``$\implies$'' direction of (c).

		\begin{case}
			Now assume that $\ca{A}$ is a general unital finite-dimensional algebra (while $\ca{B} = M_{m_1} \oplus \cdots \oplus M_{m_q}$ remains semisimple).
		\end{case}
		\smallskip
		
		(a) $\&$ (b). Note that there exists a continuous spectrum-shrinking  (spectrum-preserving)  map $\ca{A} \to  \ca{B}$ if and only if there exists a map $\ca{A}_{ss} \to  \ca{B}$ with the same property. Indeed, if $\phi : \ca{A} \to \ca{B}$ is a continuous spectrum-shrinking   (spectrum-preserving) map, then its restriction to $\ca{A}_{ss}$ inherits the same  properties. Conversely, suppose that $\psi : \ca{A}_{ss} \to \ca{B}$ is a continuous spectrum-shrinking  (spectrum-preserving)  map. Then, in view of Lemma \ref{le:specradical}, the map
		$\psi \circ \Pi_{\ca{A}} : \ca{A}  \to \ca{B}$
		satisfies the same properties as $\psi$. Therefore, it suffices to establish the claim when $\ca{A}$ is already semisimple and of the form $\ca{A} = M_{k_1} \oplus \cdots \oplus M_{k_p}$. This, in turn, follows directly from Case 1, as $M_{k_1} \oplus \cdots \oplus M_{k_p}$ can be viewed as an SMA in $M_n$ in a natural way, where $n = k_1 + \cdots + k_p$. This completes the proof of (a) and (b).
		
		\smallskip
		
		(c). Similarly as in the SMA case, assume that \eqref{eq:diophantine equation 1} admits a family  $\{(x_1^j,\ldots,x_p^j) \in \N^p_0 : 1 \le j \le q\}$ of solutions such that there exists $1 \le i \le p$ with $x_i^j = 0$, for all $1\le j \le q$. By similar arguments as in the ``$\implies$'' direction of (c) of Case 1, after the identification $\ca{A}_{ss} = M_{k_1} \oplus \cdots \oplus M_{k_p}$, we obtain a continuous spectrum-shrinking map $\phi : \ca{A}_{ss} \to \ca{B}$ which is not spectrum-preserving. Then $\phi \circ \Pi_{\ca{A}} : \ca{A} \to \ca{B}$ is the desired map. The proof of the theorem is now complete.
	\end{proof}
	
	\begin{corollary}
		Let $\ca{A}$ be a unital finite-dimensional complex algebra. Then $\ca{A}$ admits a continuous ``eigenvalue selection'' (i.e.\ a spectrum shrinker $\ca{A} \to \C$) if and only if $\ca{A}$ contains an ideal of codimension one.
	\end{corollary}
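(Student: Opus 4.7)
The plan is to apply Theorem \ref{thm:main}(a) directly, taking $\ca{B} = \C$. Since $\C$ is a field, it has a unique maximal ideal, namely the zero ideal, so in the notation of the theorem we have $q = 1$ and $m_1 = \sqrt{\dim(\C/\{0\})} = 1$. The single Diophantine equation \eqref{eq:diophantine equation 1} reduces to
$$k_1 x_1 + \cdots + k_p x_p = 1,$$
and thus Theorem \ref{thm:main}(a) asserts that a continuous spectrum shrinker $\ca{A} \to \C$ exists if and only if this equation has a solution in $\N_0^p$.

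Next I would argue that this equation has a non-negative integer solution if and only if $k_i = 1$ for some $i$. Indeed, each $k_i$ is a positive integer, so any non-negative integer combination summing to $1$ forces exactly one $x_i$ to equal $1$ (with the corresponding $k_i = 1$) and the rest to vanish; conversely, if some $k_i = 1$ then $(x_1,\ldots,x_p) = (0,\ldots,1,\ldots,0)$ works.

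Finally I would translate the condition ``some $k_i = 1$'' into the statement about codimension-one ideals. By definition, $k_i = 1$ means $\dim(\ca{A}/\ca{M}_i) = 1$, i.e.\ the maximal ideal $\ca{M}_i$ has codimension one in $\ca{A}$. Conversely, any codimension-one ideal $\ca{I}$ of $\ca{A}$ gives a quotient $\ca{A}/\ca{I}$ of dimension one, hence isomorphic to $\C$, which is a field; therefore $\ca{I}$ is maximal and appears in the list $\{\ca{M}_1,\ldots,\ca{M}_p\}$ with the corresponding $k_i = 1$. Combining these three steps proves the equivalence.

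There is no genuine obstacle here; the corollary is essentially a direct specialization of Theorem \ref{thm:main}(a) to the target algebra $\ca{B} = \C$, together with the elementary observation that codimension-one ideals of a unital algebra are automatically maximal.
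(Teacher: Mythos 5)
Your proposal is correct and follows exactly the paper's own argument: specialize Theorem \ref{thm:main}(a) to $\ca{B}=\C$ (so $q=1$, $m_1=1$), observe that $k_1x_1+\cdots+k_px_p=1$ has a solution in $\N_0^p$ iff some $k_i=1$, and identify this with the existence of a (necessarily maximal) codimension-one ideal. Your extra remark that codimension-one ideals are automatically maximal is the same point the paper makes parenthetically.
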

	\begin{proof}
		The Diophantine equation \eqref{eq:diophantine equation 1} has a non-negative integer solution for $m=1$ if and only if one of the numbers $k_1,\ldots,k_p$ is equal to $1$. This is equivalent to the fact that $\ca{A}$ contains a (maximal) ideal of codimension one.
	\end{proof}

	\begin{remark}
		Note that for a semisimple algebra $\ca{A}$, the ``spectrum-shrinking $\implies$ spectrum-preserving'' statement of Theorem \ref{thm:main} (b) does not directly follow from \cite[Theorem 1.1]{ChirvasituGogicTomasevic1}, unless the algebra $\ca{A}$ is already simple. Indeed,  assume that $\ca{A} = M_{k_1} \oplus \cdots \oplus M_{k_p} \subseteq M_n$, where $n := k_1 + \cdots + k_p$. While all conditions of \cite[Theorem 1.1]{ChirvasituGogicTomasevic1} are satisfied, the group $\ca{A}^\times \cap S_n$ (where $S_n$ is the symmetric group, identified with the $n \times n$ permutation matrices) \emph{does not} act transitively on $[1,n]$. Instead, $\ca{A}^\times \cap S_n$ consists of permutations that fix each of the sets $[1,k_1], [k_1+1,k_2], \ldots, [k_1 + \cdots + k_{p-1} + 1, k_1 + \cdots + k_p]$.
	\end{remark}

	\begin{remark}
		If $p \ge 2$ and the numbers $k_i$'s of \eqref{eq:k_i} are coprime, then the largest $m \in \N$ for which there is no continuous spectrum-shrinking map $\phi: \ca{A} \to M_m$ is precisely the Frobenius number $g(k_1, \ldots, k_p)$ (see e.g.\ \cite{Alfonsin}).
	\end{remark}

\end{document}